\theoremstyle{plain} \numberwithin{equation}{subsection}
\newtheorem{theorem}{Theorem}[section]
\newtheorem{lemma}[theorem]{Lemma}
\newtheorem{proposition}[theorem]{Proposition}
\theoremstyle{definition}
\newtheorem{remark}[theorem]{Remark}
\newtheorem{remarks}[theorem]{Remarks}
\newtheorem{definition}[theorem]{Definition}
\def\blfootnote{\gdef\@thefnmark{}\@footnotetext}
\def\mf#1{{ \mathfrak{#1} }}
\def\ms#1{{ \mathscr{#1} }}
\def\mb#1{{ \mathbb{#1} }}
\def\mc#1{{ \mathcal{#1} }}
\def\tr#1{{ \textrm{#1} }}
\def\tb#1{{ \textbf{#1} }}
\def\isom{{ \, \stackrel \sim \longrightarrow \, }}
\def\P{{ \mb P }}
\def\id{{ \tr{id} }}
\def\C{{\mb C}}
\def\Fp{{ \mb F_p }}
\def\Ga{{ G^a }}
\def\Um{{ U^- }}
\def\Uma{{ U^{-,a} }}
\def\g{{ \mf g }}
\def\b{{ \mf b }}
\def\n{{ \mf n }}
\def\nm{{ \mf n^- }}
\def\L{{ \mc L }}
\def\huma{{ \hat U^{-,a} }}
\def\barg{{ \bar U(\g) }}
\def\barnm{{ \bar U(\nm) }}
\def\bargf#1{{ \bar U_{\leq #1}(\g) }}
\def\barnmf#1{{ \bar U_{\leq #1}(\nm) }}
\def\barnms#1{{ \bar U_{< #1}(\nm) }}
\def\dom{{ \Lambda^+ }}
\def\pos{{ \Delta^+ }}
\def\ho#1{{ H^0(#1) }}
\def\Ho#1{{ H^0 \big(  #1  \big) }}
\def\V#1{{ V(#1) }}
\def\Vstar#1{{ V(#1^*) }}
\def\vstar#1{{ v_{#1^*} }} 
\def\vastar#1{{ v_{#1^*}^a }}
\def\Vss#1{{ V(#1^*)^* }} 
\def\Va#1{{ V^a(#1) }}
\def\VA#1{{ V^a \big( #1 \big) }}
\def\Vastar#1{{ V^a(#1^*) }}
\def\Vass#1{{ V^a(#1^*)^* }}
\def\vf#1#2{{ V_{\leq #1}(#2) }}
\def\vs#1#2{{ V_{< #1}(#2) }}
\def\hof#1#2{{ H^0_{\geq #1}(#2) }}
\def\Hof#1#2{{ H^0_{\geq #1}\big( #2 \big) }}
\def\hoa#1{{ H^{0, \, a}(#1) }}
\def\Hoa#1{{ H^{0, \, a}(#1) }}
\def\R#1{{ R_{#1} }}
\def\Ra#1{{ R^a_{#1} }}
\def\Sa#1{{ S^a_{#1} }}
\def\gr{{ \tr{gr} }}
\def\kum{{ k[U^-] }}
\def\kuma{{ k[U^{-,a}] }}
\def\im{{ I_- }}
\def\imp#1{{ I_-^{#1} }}
\def\Isec#1#2{{ \Gamma\big(  \flag, \, \mc I^{#1} \otimes \L(#2)  \big) }}
\def\hynm{{ \tr{hy}(\nm) }}
\def\hynmg#1{{ \tr{hy}_{#1}(\nm) }}
\def\F#1#2{{ F_{#1}^{(#2)} }}
\def\f#1#2{{ f_{#1}^{(#2)} }}
\def\ve#1{{ \varepsilon_{\beta_{#1}} }}
\def\RI#1#2{{ I_{#1, #2} }}
\def\proj{{ \tr{Proj} }}
\def\ts{{ \widetilde \sigma }}
\def\loc{{ \mc O_{X,x} }}
\def\cone{{ \mc T_e }}
\def\Fo{{F_0}}
\def\fo{{ f_0 }}
\def\shom{{ \ms H \!om }} 
\def\flag{{G/B}}
\def\flaga#1{{ \mc F^a_{#1} }}
\def\flagastar#1{{ \mc F^a_{#1^*} }}
\def\can{{ \omega_\flag^{1-p} }}
\def\tprho{{ 2(p-1)\rho }}
\def\pN{{ (p-1)N }}
\def\s{{ \mf s }}
\def\etagr#1{{ \eta^{\gr}_#1 }}
\def\Pa#1{{ \P^a_{#1} }}
\def\Pastar#1{{ \P^a_{#1^*} }}
\def\symn{{ S(\n) }}
\def\ofa#1#2{{ \mc O_{\flagastar #1}(#2) }}
\def\opa#1#2{{ \mc O_{\Pastar #1}(#2) }}
\def\s{{ \mf s }}
\def\r{{ \mf r }}
\def\vp{{ v^a_{\lambda} }}
\def\vps{{ v^a_{\lambda^*} }}
\def\div#1{{ \mc O(#1) }}
\def\linv{{ L^{-1} }}
\begin{document}

\author{Chuck Hague}
\email{hague@math.udel.edu}
\address{Department of Mathematical Sciences \\
University of Delaware \\
501 Ewing Hall \\
Newark, DE 19716}
\title{Degenerate coordinate rings of flag varieties and Frobenius splitting}
\begin{abstract} Recently E. Feigin introduced the $\mathbb G_a^N$-degenerations of semisimple algebraic groups and their associated degenerate flag varieties. It has been shown by Feigin, Finkelberg, and Littelmann that the degenerate flag varieties in types $A_n$ and $C_n$ are Frobenius split. In this paper we construct an associated degeneration of homogeneous coordinate rings of classical flag varieties in all types and show that these rings are Frobenius split in most types. It follows that the degenerate flag varieties of types $A_n$, $C_n$ and $G_2$ are Frobenius split. In particular we obtain an alternate proof of splitting in types $A_n$ and $C_n$; the case $G_2$ was not previously known. We also give a representation-theoretic condition on PBW-graded versions of Weyl modules which is equivalent to the existence of a Frobenius splitting of the classical flag variety that maximally compatibly splits the identity.
\keywords{Frobenius splitting \and degenerate flag varieties \and PBW filtration}
\end{abstract}
\maketitle

\setcounter{tocdepth}{2} 
\tableofcontents



\section{Introduction}

Let $k$ be an algebraically closed field of positive characteristic $p$ and let $G$ be a semisimple algebraic group over $k$. In a series of papers (\cite{FeDeg}, \cite{FPBW}, \cite{FeGa}, \cite{FeOrb}, \cite{FFA}, \cite{FFLC}, \cite{FFLPBWA}, \cite{FFLPBWC}, \cite{FFLPBWZ}), Feigin, Finkelberg, Fourier and Littelmann have investigated the PBW filtration on Weyl modules for $G$, the associated degenerate group $\Ga$, and related projective varieties called degenerate flag varieties (see \S\ref{subsub:Ga}, \S\ref{sub:PBW} and \S\ref{sub:flaga} below for precise definitions). The $G_2$ case has been recently considered in \cite{Gor} (see also \S3.3 in \cite{FeOrb}).

In more detail, the PBW filtration on a Weyl module $\V \lambda$ of highest weight $\lambda$ gives rise to an associated graded $\Ga$-module $\Va \lambda$ with dual module $ \hoa{ -w_0\lambda } $, where $ w_0 $ is the longest element of the Weyl group. The modules $\hoa \lambda$ for dominant $\lambda$ are degenerate analogs of the standard induced modules $\ho \lambda$ for $G$ and it is natural to consider the degenerate version
\begin{equation}
\Ra \lambda := \bigoplus_{n \geq 0} \hoa {n\lambda}
\end{equation}
of the standard section ring $\bigoplus_{n \geq 0} \ho {n\lambda}$ for the flag variety $\flag$.

Unlike the standard section ring for $\flag$, however, it is not a priori clear that $\Ra \lambda$ has a natural ring structure. It is shown in \cite{FeOrb}, \cite{FFLPBWZ} that in types $A_n$, $C_n$ and $G_2$ there is a $\Ga$-equivariant multiplication map $ m : \hoa \lambda \otimes \hoa \mu \to \hoa{\lambda + \mu} $ for all dominant $\mu, \lambda$ coming from a natural comultiplication map $ \Va{\lambda + \mu} \to \Va \lambda \otimes \Va \mu $. In \S\ref{sec:Ra} we show that there is a natural $\Ga$-equivariant multiplication $m$ in all types and hence $\Ra \lambda$ is a $\Ga$-algebra. Consider the decreasing filtration on $\ho \lambda$ coming from vanishing degree at the identity element in $\flag$; then we show in Lemma \ref{lem:ib and hof} that $\hoa \lambda$ is the associated graded module for this filtration. The existence of the multiplication map $m$ immediately follows since vanishing degree is multiplicative. Although one can show (cf Proposition \ref{pr:Ra and Sa}) that $\Ra \lambda$ is a domain, it is not clear if it is Noetherian in general although this is known in types $A_n$, $C_n$ and $G_2$. This lack of well-behavedness is one of the main technical issues in the last part of this paper and requires some delicate handling.

Next, we show in \S\ref{sec:splitting of Ra} (cf Theorem \ref{th:Frobenius splitting of Ra}) that if $\flag$ has a Frobenius splitting that maximally compatibly splits the identity then the ring $\Ra \lambda$ is Frobenius split. In particular, $\Ra \lambda$ is Frobenius split in the classical types and in type $G_2$. We then show in Proposition \ref{pr:fo} that the existence of such a splitting of $\flag$ is equivalent to an interesting representation-theoretic condition on the degenerate Weyl module $\Va \tprho$, where $\rho$ is the half-sum of the positive roots of $G$.

In the last section \S\ref{sec:geometry} we turn our attention to the degenerate flag varieties. For dominant $\lambda$ we consider the section ring $\Sa \lambda$ on an associated degenerate flag variety. It is not clear in general whether this ring is isomorphic to $\Ra \lambda$, although this is known to be true in types $A_n$, $C_n$ and $G_2$ by \cite{FeGa} and \cite{FeOrb}. In these cases the Frobenius splitting of the degenerate flag variety follows from the splitting of the ring $\Ra \lambda$, cf Theorem \ref{th:ACG splitting}. In particular, we obtain an alternate proof of Frobenius splitting results for degenerate flag varieties in types $A_n$ and $C_n$, which was originally proved in \cite{FFA} and \cite{FFLC}; that the degenerate flag varieties of type $G_2$ are Frobenius split was previously unknown. 

We then compare the rings $\Ra \lambda$ and $\Sa \lambda$ in all types. We formulate conditions under which $\proj (\Ra \lambda)$ is isomorphic to the associated degenerate flag variety, a weaker condition than having an isomorphism $\Ra \lambda \cong \Sa \lambda$. These conditions imply that the associated degenerate flag variety is Frobenius split, cf Theorem \ref{th:splitting of flaga}.

Also, I would like to thank Evgeny Feigin and Shrawan Kumar for helpful conversations and the anonymous referee for carefully reading the paper and pointing out typos and areas for improvement.

\section{Background}

\subsection{$G$ and $\Ga$}

\subsubsection{$G$}
Let $G$ be as above. Let $B \subseteq G$ be Borel subgroup and let $U \subseteq B$ be its unipotent radical. Let $B^- \subseteq G$ be the opposite Borel subgroup to $B$ and let $U^- \subseteq B^-$ be its unipotent radical. Let $T \subseteq B$ be a maximal torus. Set $\g = \tr{Lie}(G)$, $\n = \tr{Lie}(U)$ and $\nm = \tr{Lie}(U^-)$. Denote by $ \barg $ and $\barnm$ the hyperalgebras of $G$ and $U^-$, respectively. Let $\bargf n$ denote the degree filtration on $\barg$. This filtration restricts to a degree filtration $\barnmf n$ on $\barnm$. 

Let $\Delta \subseteq \Lambda$ denote the roots of $G$ and let $\pos \subseteq \Delta$ denote the positive roots corresponding to our choice of Borel $B$. Set $N := |\pos|$ and denote by $\rho$ the half-sum of the positive roots. Let $\Lambda$ denote the weight lattice of $T$ and let $\dom \subseteq \Lambda$ be the set of dominant weights. For any $\lambda \in \Lambda$ we have the one-dimensional $B$-module $\chi_\lambda$ with weight $\lambda$.

For $\lambda \in \dom$ we have the Weyl module $V(\lambda)$ with highest weight $\lambda$ and the induced module $\ho \lambda$ of highest weight $\lambda$. Set $\lambda^* := -w_0 \lambda \in \dom$, where $w_0$ is the longest element of the Weyl group of $G$; then we have $\ho \lambda \cong \Vss \lambda$ and there is a $G$-equivariant duality pairing
\begin{equation} \label{eq:eta}
\eta_\lambda : \ho \lambda \otimes \Vstar \lambda \to k.
\end{equation} For each $\lambda \in \dom$ choose a nonzero highest-weight element $v_\lambda \in \V \lambda$.

Fix a Chevalley basis $ \{ F_{-\beta}, E_\beta : \beta \in \pos \}  \subseteq \g $. Then $\barnm$ is generated as a $k$-algebra by the divided-power elements $\F \beta n \in \barnm$ for $\beta \in \pos$ and $n \geq 0$. Denote by $\hynm$ the hyperalgebra of $\nm$, where we consider $\nm$ as an algebraic group isomorphic to a product of copies of $ \mb G_a $. By the PBW theorem there is a $k$-algebra isomorphism
\begin{equation}
\gr \, \barnm \cong \hynm.
\end{equation}
Denote by $ \f \beta n \in \hynm $ the image of the basis element $\F \beta n$ under the projection $\barnmf n \twoheadrightarrow \hynmg n$. Then $\hynm$ is a divided-power polynomial ring over $k$ on the generators $ \{ \f \beta n : \beta \in \pos, n > 0 \} $. Further, $ \hynm $ has a natural $B$-module algebra structure coming from the $B$-module structure on $ \nm \cong \g / \b $.

\subsubsection{$\Ga$} \label{subsub:Ga}

Following \cite{FFLPBWZ}, we construct a degenerate form $\Ga$ of $G$ by abelianizing the negative part $U^-$ of $G$ as follows. First, $U^-$ has a filtration by normal subgroups
\begin{equation}
U^-_s := \prod_{\tr{ht} (\beta) \geq s} U_{-\beta}.
\end{equation}
Set
\begin{equation}
\Uma := \prod_{s \geq 1} U^-_s / U^-_{s+1}.
\end{equation}
Then $\Uma$ is a commutative unipotent group with a $B$-action induced by the conjugation action on $\Um$. In particular, $\Uma$ is isomorphic to a product of copies of the additive group $ \mb G_m $. We now set
\begin{equation}
\Ga := B \ltimes \Uma.
\end{equation}
Remark that $\hynm$ is $\Ga$-equivariantly isomorphic to the hyperalgebra of $ \Uma $.

\subsection{The PBW filtration and the dual filtration} \label{sub:PBW}

Following \cite{FFLPBWZ}, for $\lambda \in \dom$ define an increasing filtration on the cyclic $\barnm$-module $V(\lambda)$ by
\begin{equation}
\vf n \lambda := \barnmf n . v_\lambda
\end{equation}
(recall that $v_\lambda \in \V \lambda$ is a nonzero highest weight vector). We call this filtration the \tb{PBW filtration} on $\V \lambda$. This filtration is $B$-stable and gives rise to an associated graded $B$-module
\begin{equation}
\Va \lambda := \gr \,\V \lambda.
\end{equation}
Dually, replacing $\lambda$ by $\lambda^*$ we obtain a decreasing $B$-stable filtration on $ \ho \lambda $ by
\begin{equation} \label{eq:hof}
\hof n \lambda := \vs n {\lambda^*}^\perp \subseteq \Vss \lambda = \ho \lambda.
\end{equation}
Explicitly, we have
\begin{equation} \label{eq:hof explicitly}
\hof n \lambda = \big \{   v \in \ho \lambda : \eta_\lambda\big(  v \otimes \barnms n.\vstar \lambda  \big) = 0  \big \}.
\end{equation}
We now obtain an associated graded $B$-module
\begin{equation}
\hoa \lambda := \gr \, \ho \lambda.
\end{equation}
By \S2, \S3 in \cite{FFLPBWZ} there is a natural $\hynm$-module structure on $ \Va \lambda $ coming from the $\barnm$-module structure on $\V \lambda$. It follows that $\Va \lambda$ is a cyclic $\hynm$-module and the $\hynm$-module structure integrates to a $\Uma$-module structure on $\Va \lambda$. We now obtain a natural $\Ga$-module structure on $\Va \lambda$ which agrees with the $B$-module structure described above. Dually, we obtain a $\Ga$-module structure on $\hoa \lambda$ and the nondegenerate $G$-equivariant duality pairing $\eta_\lambda$ of (\ref{eq:eta}) induces a graded nondegenerate $\Ga$-equivariant pairing
\begin{equation} \label{eq:etagr}
\etagr \lambda : \hoa \lambda \otimes \Vastar \lambda \to k.
\end{equation}

In the sequel we will denote the graded components of $ \Va \lambda $ and $ \hoa \lambda $ by $\Va \lambda_m$ and $\hoa \lambda_m$. Also, the highest-weight spaces in $\V \lambda$ and $\Va \lambda$ are naturally isomorphic; let $ v^a_\lambda \in \Va \lambda $ be the image of the highest-weight vector $v_\lambda \in \V \lambda$ under this isomorphism.

\section{The degenerate coordinate ring $\Ra \lambda$} \label{sec:Ra}

\subsection{The filtration $\Isec n \lambda$} \label{sub:Isec}

For $\lambda \in \dom$ let $\mc L(\lambda)$ denote the line bundle on $G/B$ with fiber $\chi_{w_0\lambda}$ (recall that for $\mu \in \Lambda$, $\chi_\mu$ denotes the 1-dimensional $B$-module of weight $\mu$). Then we have
\begin{equation} \label{eq:ind isomorphisms}
\Gamma\big(  G/B, \mc L(\lambda)  \big) \cong \ho \lambda.
\end{equation}
Let $\mc I \subseteq \mc O_{G/B}$ be the ideal sheaf of identity $eB \in \flag$. Since the identity is a $B$-stable point, $\Isec n \lambda$ is a $B$-submodule of $ \Gamma\big(  G/B, \mc L(\lambda)  \big) $ for all $n \geq 0$.

Recall the $B$-stable filtration $\hof n \lambda$ on $\ho \lambda$ from (\ref{eq:hof}). The following important lemma is motivated by \S4 of \cite{KuNil}.
\begin{lemma} \label{lem:ib and hof}
The isomorphism (\ref{eq:ind isomorphisms}) restricts to a $B$-equivariant isomorphism
$$ \Isec n \lambda \cong \hof n \lambda$$
for all $n \geq 0$.
\end{lemma}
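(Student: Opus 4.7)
The plan is to restrict sections to the open cell $\Um B/B \subseteq \flag$ and apply local duality between $\kum$ and the hyperalgebra $\barnm$. The open immersion $\Um \hookrightarrow \flag$, $u \mapsto uB$, identifies $\Um$ with an open neighborhood of $eB$ on which $\L(\lambda)$ is trivial; fixing a generator of the fiber $\chi_{w_0\lambda}$ at $e$ yields an injective restriction map
\[ r \colon \ho\lambda \hookrightarrow \kum. \]
Let $\im \subseteq \kum$ denote the maximal ideal of functions vanishing at $e$. Since vanishing of a section of $\L(\lambda)$ to order $\geq n$ at the smooth point $eB$ is a local condition that pulls back to membership in $\im^n$, we have $\Isec n \lambda = r^{-1}(\im^n)$.

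Next, the algebra $\barnm$ is canonically the topological dual of the completed local ring of $\Um$ at $e$, and under this duality $\im^n$ is the annihilator of $\barnms n$. Concretely, viewing $D \in \barnm$ as the translation-invariant differential operator on $\Um$ supported at $e$, one has $f \in \im^n$ if and only if $(Df)(e) = 0$ for every $D \in \barnms n$.

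The key compatibility, in the spirit of \S4 of \cite{KuNil}, is the identity
\[ (D r(s))(e) \; = \; \eta_\lambda\big( s \otimes S(D) \cdot \vstar\lambda \big) \]
for $s \in \ho\lambda$ and $D \in \barnm$, where $S$ denotes the antipode of $\barnm$. Both sides are built from the left-translation action of $G$: the left-hand side is the $\barnm$-action on $\ho\lambda$ composed with the $B$-equivariant evaluation at $e$, which yields the canonical surjection $\ho\lambda \twoheadrightarrow \chi_{w_0\lambda}$; under the identification $\ho\lambda \cong \Vss\lambda$ of (\ref{eq:ind isomorphisms}), this evaluation corresponds dually to the inclusion of the highest-weight line $k \cdot \vstar\lambda \hookrightarrow \Vstar\lambda$. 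Establishing this compatibility carefully, and in particular keeping track of the antipode that appears when converting between actions on sections and actions on highest-weight vectors, is the principal technical obstacle. Since $S$ preserves each filtration piece $\barnmf n$, however, the antipode is harmless for the purposes of the lemma.

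Chaining the equivalences then yields
\[ s \in \Isec n \lambda \iff r(s) \in \im^n \iff \eta_\lambda(s \otimes D \vstar\lambda) = 0 \text{ for all } D \in \barnms n, \]
which by the explicit description in (\ref{eq:hof explicitly}) is exactly $s \in \hof n \lambda$. The $B$-equivariance of the restricted isomorphism is automatic, since $\Isec n \lambda$ and $\hof n \lambda$ are both $B$-stable subspaces of $\ho\lambda$ and the ambient isomorphism (\ref{eq:ind isomorphisms}) is $B$-equivariant.
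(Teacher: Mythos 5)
Your proof takes essentially the same route as the paper's: restrict to the big cell, identify $\Isec n\lambda$ with $\ho\lambda\cap\im^n$, and convert membership in $\im^n$ into annihilation of $\barnms n .\vstar\lambda$ under $\eta_\lambda$. The paper makes the ``key compatibility'' you flag explicit by parametrizing $U^-$ via the maps $\ve i$ and reading off Taylor coefficients of $a(v)(g)=\eta_\lambda(v\otimes g.\vstar\lambda)$, a convention under which no antipode appears (it only shows up if one differentiates via left translation), and in any case your observation that $S$ preserves each $\barnmf n$ correctly disposes of that point.
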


\begin{proof}
Identify $\Um$ with the big cell $\Um B \subseteq \flag$. Then there is a section restriction inclusion
\begin{subequations}
\begin{equation} \label{eq:a1}
a : \ho \lambda \hookrightarrow \kum
\end{equation}
given by
\begin{equation} \label{eq:a2}
a(v)(g) = \eta_\lambda\big(  v \otimes  g.\vstar \lambda \big)
\end{equation}
\end{subequations}
for all $v \in \ho \lambda$ and $g \in U^-$. For the rest of the proof we will consider $\ho \lambda = \Gamma( \flag, \L(\lambda) )$ as a subspace of $\kum$ via this inclusion.

Let $ \im \subseteq \kum $ be the ideal of the identity. Then the restriction of the ideal sheaf $\mc I$ to $\Um$ is $\im$. Thus, considering $ \Isec n \lambda $ as a subspace of $ \ho \lambda $ inside of $\kum$, we have
\begin{equation*}
\Isec n \lambda = \imp n \cap \ho \lambda.
\end{equation*}
Hence it suffices to show that
\begin{equation} \label{eq:ib and hof}
\hof n \lambda = \ho \lambda \cap \imp n
\end{equation}
for all $n \geq 0$. This now follows from the usual technique of integrating the $\barnm$-action on $\Vstar \lambda$ to the $U^-$-action. For completeness we sketch the details here.

Fix an ordering $ \beta_1, \ldots, \beta_N $ of $\pos$. For each $1 \leq i \leq N$ let
$$ \ve i : \mb G_m \isom U_{-\beta_i} \subseteq U^- $$
be an algebraic group isomorphism such that $ d \ve i = F_{\beta_i} $. Then for all $ c_1, \ldots, c_N \in k $ we have
\begin{equation}
\ve 1(c_1) \cdots \ve N(c_N).\vstar \lambda = \sum c_1^{n_1} \cdots c_N^{n_N} \cdot  \F{\beta_1}{n_1} \ldots \F{\beta_N}{n_N}.\vstar \lambda,
\end{equation}
where this sum ranges over all tuples $(n_1, \ldots, n_N) \in \mb Z_+^N$. Thus by (\ref{eq:a2}) we have
\begin{equation*}
a(v)\big(  \ve 1(c_1) \cdots \ve N(c_N)  \big) = \sum c_1^{n_1} \cdots c_N^{n_N} \cdot \eta_\lambda\big(  v \otimes \F{\beta_1}{n_1} \ldots \F{\beta_N}{n_N}.\vstar \lambda  \big).
\end{equation*}
Comparing with the explicit description (\ref{eq:hof explicitly}) of $\hof n \lambda$ we obtain the equality (\ref{eq:ib and hof}).
\end{proof}

\subsection{Standard and degenerate coordinate rings}

Choose $\lambda, \mu \in \dom$. By Lemma \ref{lem:ib and hof} the multiplication map $ \ho{\lambda} \otimes \ho{\mu} \twoheadrightarrow \Ho{\lambda + \mu} $ is filtration-preserving; that is, this multiplication map restricts to a $B$-equivariant multiplication
\begin{equation} \label{eq:ho mult}
\hof i{\lambda} \otimes \hof j{\mu} \to \Hof{i+j}{\lambda+\mu}
\end{equation}
for all $i,j \geq 0$. Taking the associated graded modules we have an induced multiplication morphism
\begin{subequations} 
\begin{equation} \label{eq:hoa mult}
\hoa {\lambda} \otimes \hoa{\mu} \to \Hoa{\lambda+\mu}.
\end{equation}
Dually, we have a $B$-equivariant comultiplication morphism
\begin{equation} \label{eq:Va comult}
\VA {\lambda+\mu} \to \Va{\lambda} \otimes \Va{\mu}.
\end{equation}
\end{subequations}
By the results in \cite{Gor} (cf \S3.3 in \cite{FeOrb}) and Proposition 8.1 in \cite{FFLPBWZ}, if $G$ is of type $A$, $C$ or $G_2$ then the multiplication morphism (\ref{eq:hoa mult}) is surjective for all $i, j$ (and, equivalently, \ref{eq:Va comult} is injective). It is not known if (\ref{eq:hoa mult}) is surjective in the other types.

For $\lambda \in \dom$ set
\begin{equation}
\R \lambda := \bigoplus_{n \geq 0} \ho {n\lambda},
\end{equation}
the homogeneous coordinate ring of a partial flag variety $G/P$ for $G$ under the embedding $G/P \hookrightarrow \P (\Vstar\lambda)$. Set
\begin{equation}
\Ra \lambda := \bigoplus_{n \geq 0} \hoa{n\lambda},
\end{equation}
a degenerate version of the coordinate ring $\R \lambda$.

We have the following equivalent description of $\Ra \lambda$. Following Lemma \ref{lem:ib and hof}, for each $m \geq 0$ set
\begin{equation} \label{eq:I lambda}
\RI \lambda m := \bigoplus_{n \geq 0} \hof m{n\lambda} = \bigoplus_{n \geq 0} \Gamma\big(  G/B, \, \mc I^m \otimes \L(n\lambda)  \big) ,
\end{equation}
a $B$-stable ideal of $\R \lambda$. Then the ideals $\RI \lambda m$ form a decreasing filtration of $\R \lambda$ such that $ \RI \lambda m \cdot \RI \lambda k \subseteq \RI \lambda{m+k} $ for all $ m, k \geq 0 $ and we have
\begin{equation} \label{eq:gr R}
\gr \R \lambda = \Ra \lambda.
\end{equation}

\section{Frobenius splitting of $\Ra \lambda$} \label{sec:splitting of Ra}

\subsection{Background on Frobenius splitting}

We give a quick recollection of basic Frobenius splitting results (see \cite{BK} for further details). Let $ A $ be a $k$-algebra. We say that an $\Fp$-linear morphism $\sigma : A \to A$ is \tb{Frobenius-linear} if $\sigma(f^p g) = f \cdot \sigma g$ for all $ f, g \in A $. We say that $A$ is \tb{Frobenius split} if there is a Frobenius-linear endomorphism $\sigma$ such that $\sigma(1) = 1$.

Sheafifying this idea, we obtain the idea of a Frobenius split $k$-scheme. Let $X$ be a $k$-scheme and let $F : X \to X$ be the (absolute) Frobenius morphism, i.e. the morphism which is the identity on points and the $p^{th}$ power map on functions. Then we say that $X$ is \tb{Frobenius split} if there is a sheaf map $\sigma \in \shom_{\mc O_X}( F_* \mc O_X, \mc O_X )$ such that $\sigma(1) = 1$.

A Frobenius splitting of $X$ lifts to a splitting on sections of invertible sheaves in the following way. Given an invertible sheaf $\L$ on a projective $k$-variety $X$ set
\begin{equation} \label{eq:R(L)}
R(\L) := \bigoplus_{n \geq 0} \Gamma( X, \L^n ),
\end{equation}
the section ring of $\L$. Let $\ts : F_* \mc O_X \to \mc O_X$ be a splitting of $X$. Then in this case there is a natural graded splitting $\sigma$ of $R(\L)$ induced by $\ts$, where by a \tb{graded splitting} we mean a splitting that sends graded degree $pm$ to graded degree $m$ and kills all other degrees that are not divisible by $p$.

We can explicitly describe $\sigma$ as follows (cf Definition 4.1.12 in \cite{BK}). There is an $\mc O_X$-linear isomorphism $ F^* \mc M \cong \mc M^p $ for any invertible sheaf $\mc M$ on $X$ and hence by Lemma 1.2.6 in \cite{BK} a (non-$k$-linear!) isomorphism
\begin{equation}
\Gamma( X, \L^{mp} ) \cong \Gamma(X, F_* F^* \L^m) \cong \Gamma(X, \L^m \otimes_{\mc O_X} F_* \mc O_X)
\end{equation}
for all $m \geq 0$. Then the map
$$ \id \otimes \ts : \L^m \otimes_{\mc O_X} F_* \mc O_X \to \L^m \otimes_{\mc O_X} \mc O_X = \L^m $$
induces a map
$$  \Gamma( X, \L^{pm} ) \to \Gamma(X, \L^m)  $$
on global sections. Taking the direct sum of these morphisms and extending by zero on graded degrees $k$ with $p \nmid k$, we obtain a morphism $ \sigma : R(\L) \to R(\L) $ which can be checked to be Frobenius-linear (cf the proof of Lemma 4.1.13 in \cite{BK}). It follows that $\sigma$ splits the $p^{th}$ power morphism on $R(\L)$ and hence is a graded splitting of $R(\L)$.

We also recall the following essential result from \cite{BK}, \cite{LTmax}.

\begin{lemma}[\cite{BK}, Lemma 1.1.14 and \cite{LTmax}, Lemma 2.7] \label{lem:projective splitting}
Let $ A $ be a Noetherian graded $k$-algebra with $A_0 = k$ and set $X := \proj \, A$. Then $A$ is a Frobenius split ring if and only if $X$ is Frobenius split.
\end{lemma}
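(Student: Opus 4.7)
The plan is to establish both directions by translating graded Frobenius splittings of $A$ into local splittings on an affine cover of $X = \proj A$.

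For the forward direction, begin with a Frobenius splitting $\sigma : A \to A$ and reduce to the case where $\sigma$ is \emph{graded}, meaning $\sigma(A_{pn}) \subseteq A_n$ for all $n \geq 0$ and $\sigma(A_m) = 0$ for $p \nmid m$. This is achieved by replacing $\sigma$ with the composition of $\sigma$ with the projection onto the appropriate graded pieces of $A$; Frobenius-linearity is preserved because the relation $\sigma(f^p g) = f \sigma(g)$ is homogeneous in $f$ and $g$, and the new map still sends $1 \mapsto 1$ since $1 \in A_0$. For each homogeneous $f \in A_+$, extend $\sigma$ to a Frobenius-linear endomorphism of the localization $A_f$ via the standard formula $\sigma(a/f^n) := \sigma(af^{(p-1)n})/f^n$; because $\sigma$ is graded, this endomorphism preserves the degree-zero subring $(A_f)_0 = \mc O_X(D_+(f))$. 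These local splittings automatically agree on double intersections $D_+(fg)$ by compatibility of localization, hence glue to a Frobenius splitting $\sigma_X$ of $\mc O_X$.

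For the reverse direction, suppose $\ts$ is a Frobenius splitting of $X$. Using that $A$ is Noetherian with $A_0 = k$, choose $d \geq 1$ so that the Veronese subalgebra $A^{(d)} = \bigoplus_{n \geq 0} A_{nd}$ is generated in degree $1$ and coincides with the section ring $R(\mc O_X(d))$ of the ample line bundle $\mc O_X(d)$ on $X \cong \proj A^{(d)}$. By the construction recalled immediately before the lemma, $\ts$ induces a graded Frobenius splitting of $R(\mc O_X(d)) = A^{(d)}$. One then argues that this splitting of the Veronese extends uniquely to a graded Frobenius splitting of the full ring $A$: the values on a graded piece $A_{pn}$ for $d \nmid n$ are forced by Frobenius-linearity together with the restriction to $A^{(d)}$, since any $a \in A_{pn}$ and any homogeneous $h \in A$ satisfy the relation $\sigma(h^p a) = h \cdot \sigma(a)$, and by choosing $h$ so that $h^p a$ lands in a Veronese degree one can define $\sigma(a)$ consistently.

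The main obstacle is the reverse direction: one must show that the splitting of $A^{(d)}$ extends coherently across all graded pieces of $A$, which is where Noetherianity of $A$ enters essentially and which requires a careful compatibility check as one varies the auxiliary homogeneous element $h$ used to move elements into the Veronese.
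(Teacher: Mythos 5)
The paper cites this lemma from \cite{BK} and \cite{LTmax} without reproducing a proof, so I evaluate your argument on its own terms. Your forward direction ($A$ split implies $X$ split) is correct: the reduction to a graded splitting is valid (for inhomogeneous $f$ use $f^p = \sum_i f_i^p$ in characteristic $p$), the Frobenius-linear extension to each homogeneous localization $A_f$ via $\sigma(a/f^n) := \sigma(a f^{(p-1)n})/f^n$ is well-defined and preserves $(A_f)_0$ precisely because $\sigma$ is graded, and the local maps glue over the $D_+(f)$. This is also the only direction the paper actually invokes, in Theorems \ref{th:ACG splitting} and \ref{th:splitting of flaga}.

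The reverse direction has two genuine gaps. First, the assertion that for suitable $d$ the Veronese $A^{(d)}$ \emph{coincides} with the section ring $R(\mathcal{O}_X(d))$ does not follow from the hypotheses: Noetherianity, $A_0 = k$, and generation of $A^{(d)}$ in degree one only give that $A^{(d)}_n \to \Gamma(X, \mathcal{O}_X(dn))$ is an isomorphism for $n \gg 0$; in low degrees this map need be neither injective nor surjective. In fact the implication ``$X$ split $\Rightarrow A$ split'' fails outright without some condition ensuring $A$ injects into its section ring: for $A = k[x,y]/(x^2, xy)$, which is Noetherian graded with $A_0 = k$, the scheme $\proj A$ is the reduced point $\operatorname{Spec} k$ (hence split), while $A$ itself contains the nilpotent $x$ and so admits no Frobenius splitting. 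Thus the splitting produced by the construction recalled before the lemma lives on $R(\mathcal{O}_X(d))$, and transporting it even to $A^{(d)}$ already requires an argument not supplied. Second, even granting a graded splitting of $A^{(d)}$, your extension to all of $A$ is not a construction: $\sigma(h^p a) = h\,\sigma(a)$ is a constraint a Frobenius-linear $\sigma$ must satisfy, not a formula defining $\sigma(a)$ --- to solve for $\sigma(a)$ one would need $h$ to be a nonzerodivisor and $\sigma(h^p a)$ to lie in $hA$, neither of which is available, and the latter is essentially what is to be proved. Choosing $d$ prime to $p$ and using the decomposition of $A$ by residue classes of degree modulo $d$ as an $A^{(d)}$-module is a more promising route, but as written the passage from $A^{(d)}$ to $A$ is a missing argument rather than merely a ``careful compatibility check.''
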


\subsection{Frobenius splitting of $\Ra \lambda$}

\begin{definition}
Let $X$ be a nonsingular variety and let $Y \subseteq X$ be a closed nonsingular variety with ideal sheaf $\mc I \subseteq \mc O(X)$. Following Definition 2.2 in \cite{LTmax}, we say that $Y$ is \tb{maximally compatibly split} if there is a Frobenius splitting $ \sigma : F_*\mc O_X \to \mc O_X $ such that $ \sigma\big(  \mc I^{pm + 1}  \big) \subseteq \mc I^{m+1} $ for all $m \geq 0$. (Remark that the equivalent condition proved in Lemma 2.12 of \cite{LTmax} is frequently given as the definition of a maximal compatible splitting, cf exercises 1.3.E.12 and 13 in \cite{BK}.)
\end{definition}

Recall the definition of the ring $R(\L)$ from (\ref{eq:R(L)}) above. 

\begin{proposition} \label{pr:max split section ring}
Let $X$ be a smooth projective $k$-variety and let $\mc L$ be an invertible sheaf on $X$. Let $Y \subseteq X$ be a smooth closed subvariety with ideal sheaf $\mc I$ and assume that $Y$ is maximally compatibly split by a splitting $ \ts: F_* \mc O_X \to \mc O_X $ of $X$. For $m \geq 0$ set $ J_m := \bigoplus_{n \geq 0} \Gamma( X, \mc I^m \otimes \L^n ) $, an ideal of $ R(\L) $. Let $ \sigma : R(\L) \to R(\L) $ be the Frobenius splitting of $R(\L)$ obtained from $\ts$ by the discussion above. Then $\sigma (J_{pm+1}) \subseteq J_{m+1}$ for all $m \geq 0$. 
\end{proposition}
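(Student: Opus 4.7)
The proof is a direct unwinding of the construction of $\sigma$ from $\ts$, so I would work one graded piece at a time. Recall that on graded degree $pn$, the splitting $\sigma : \Gamma(X, \mc L^{pn}) \to \Gamma(X, \mc L^n)$ is obtained from $\id \otimes \ts : \mc L^n \otimes_{\mc O_X} F_* \mc O_X \to \mc L^n$ via the identification $\Gamma(X, \mc L^{pn}) \cong \Gamma(X, \mc L^n \otimes_{\mc O_X} F_* \mc O_X)$, and $\sigma$ vanishes on graded degrees not divisible by $p$. Since $J_m = \bigoplus_n \Gamma(X, \mc I^m \otimes \mc L^n)$, it therefore suffices to check that for each $n \geq 0$ the map on degree $pn$ carries $\Gamma(X, \mc I^{pm+1} \otimes \mc L^{pn})$ into $\Gamma(X, \mc I^{m+1} \otimes \mc L^n)$.

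The key sheaf-theoretic point is that the isomorphism $\mc L^{pn} \cong \mc L^n \otimes_{\mc O_X} F_* \mc O_X$ (coming from $F^* \mc L^n \cong \mc L^{pn}$ together with the projection formula) restricts to an isomorphism of subsheaves
\begin{equation*}
\mc I^{pm+1} \otimes \mc L^{pn} \isom \mc L^n \otimes_{\mc O_X} F_* \mc I^{pm+1}.
\end{equation*}
To verify this I would trivialize $\mc L$ locally by a section $t$: a local section of $\mc L^{pn}$ then takes the form $g \cdot t^{pn}$ and corresponds to $t^n \otimes g$ in $\mc L^n \otimes_{\mc O_X} F_* \mc O_X$, which lies in the subsheaf on the right precisely when $g \in \mc I^{pm+1}$.

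Combining these, for a local section $g \cdot t^{pn}$ with $g \in \mc I^{pm+1}$, the map $\id \otimes \ts$ produces $\ts(g) \cdot t^n$. By the maximal compatibility hypothesis $\ts(\mc I^{pm+1}) \subseteq \mc I^{m+1}$, so the output lies in $\mc I^{m+1} \otimes \mc L^n$; this is a local statement but holds on sections after globalizing. Summing over $n \geq 0$ and using that $\sigma$ kills degrees not divisible by $p$ then gives the required inclusion $\sigma(J_{pm+1}) \subseteq J_{m+1}$. The main thing to be careful about is the twist coming from Frobenius in the identification $F^* \mc L^n \cong \mc L^{pn}$ and the corresponding twisted $\mc O_X$-module structure on $F^* \mc O_X$; working locally via a trivialization of $\mc L$ makes this transparent and avoids having to manipulate projection-formula isomorphisms abstractly.
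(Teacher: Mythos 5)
Your proof is correct and follows essentially the same route as the paper: both reduce to the definition of $\sigma$ via $\id\otimes\ts$ on $\L^n\otimes_{\mc O_X}F_*\mc O_X$, verify via a local trivialization of $\L$ that the ideal-twisted subobject in degree $pn$ is carried to the one in degree $n$, and then invoke $\ts(\mc I^{pm+1})\subseteq\mc I^{m+1}$. The only cosmetic difference is that you phrase the key step as an isomorphism of subsheaves $\mc I^{pm+1}\otimes\L^{pn}\cong\L^n\otimes_{\mc O_X}F_*\mc I^{pm+1}$ and apply $\ts$ sheaf-theoretically, whereas the paper fixes a point $x\in Y$, passes to the local ring $\mc O_{X,x}$, and runs the same computation stalkwise.
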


\begin{proof}
We may assume without loss of generality that $X$ is irreducible. Fix $x \in Y$ and choose $n \geq 0$. Choose a trivialization $t : \Gamma(X, \L^n) \hookrightarrow \loc$. This trivialization induces a trivialization $t^p : \Gamma( X, \L^{pn} ) \hookrightarrow \loc $ such that $t^p(s^p) = t(s)^p$ for all $s \in \Gamma(X, \L^n)$.

Let $K \subseteq \loc$ be the local ideal defining $Y$ at $x$; then $K$ is the stalk of $\mc I$ at $x$. Since $Y$ is smooth, for all $j \geq 0$ we have that $K^j$ is the ideal of $\loc$ consisting of elements that vanish along $Y$ to order at least $j$. Denote by $ \ts_x : \loc \to \loc $ the splitting of $ \loc $ induced by $\ts$. Then we have a commutative diagram
\begin{equation}
\xymatrix{
\Gamma(X, \L^{pn}) \ar@{^(->}[r]^{ \hspace{.2in} t^p} \ar[d]_\sigma & \loc \ar[d]^{\ts_x} \\
\Gamma(X, \L^n) \ar@{^(->}[r]_{ \hspace{.2in} t } & \loc
}
\end{equation}
which restricts to a commutative diagram
\begin{equation}
\xymatrix{
\Gamma(X, \mc I^{pm+1} \otimes \L^{pn}) \ar@{^(->}[r]^{ \hspace{.4in} t^p} \ar[d]_\sigma & K^{pm+1} \ar[d]^{\ts_x} \\
\Gamma(X, \L^n) \ar@{^(->}[r]_{ \hspace{.15in} t } & \loc
}
\end{equation}
for all $m \geq 0$. Now, we have $ \ts_x\big(  K^{pm+1}  \big) \subseteq K^{m+1} $ by the definition of a maximal compatible splitting, which implies
$$  \sigma\big(  \Gamma(X, \mc I^{pm+1} \otimes \L^{pn})  \big) \subseteq t^{-1} K^{m+1} = \Gamma(X, \mc I^{m+1} \otimes \L^{pn}) .$$
Hence $\sigma(J_{pm+1}) \subseteq J_{m+1}$ for all $m \geq 0$ as desired.
\end{proof}

By \cite{LMP}, we know that if each simple component of $G$ is of classical type or type $G_2$ then there is a splitting of $G/B$ that maximally compatibly splits the identity. As a result of (\ref{eq:I lambda}), (\ref{eq:gr R}) and Proposition \ref{pr:max split section ring} we now have the

\begin{theorem} \label{th:Frobenius splitting of Ra}
Assume that there is a Frobenius splitting of $G/B$ that maximally compatibly splits the identity. Then for all $\lambda \in \dom$ the degenerate coordinate ring $\Ra \lambda$ is Frobenius split. In particular, if $G$ is of classical type or of type $G_2$ then $\Ra \lambda$ is Frobenius split.
\end{theorem}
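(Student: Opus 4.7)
The plan is to apply Proposition \ref{pr:max split section ring} with $X = \flag$, $\mc L = \L(\lambda)$, and $Y = \{eB\}$ (trivially a smooth closed subvariety), and then to descend the resulting Frobenius splitting of $\R\lambda = R(\L(\lambda))$ along the filtration $\RI\lambda\bullet$ to a Frobenius splitting of the associated graded ring $\Ra\lambda = \gr \R\lambda$, using the identification (\ref{eq:gr R}). By hypothesis together with Proposition \ref{pr:max split section ring}, the graded splitting $\sigma$ of $\R\lambda$ induced from the maximal compatible splitting of $eB$ satisfies $\sigma(\RI\lambda{pm+1}) \subseteq \RI\lambda{m+1}$ for every $m \geq 0$.

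A short bookkeeping step yields the companion inclusion $\sigma(\RI\lambda{pm}) \subseteq \RI\lambda m$ for all $m \geq 0$: the case $m = 0$ is trivial, and for $m \geq 1$ one has $\RI\lambda{pm} \subseteq \RI\lambda{p(m-1)+1}$, so the previous inclusion forces $\sigma(\RI\lambda{pm}) \subseteq \RI\lambda m$. Together these two inclusions let us define $\bar\sigma : \Ra\lambda \to \Ra\lambda$ by $\bar\sigma(\bar x) := \overline{\sigma(x)}$ for $x \in \RI\lambda{pm}$ (well-defined because $\sigma(\RI\lambda{pm+1}) \subseteq \RI\lambda{m+1}$), and by zero on graded pieces of filtration degree not divisible by $p$. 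Frobenius-linearity of $\bar\sigma$ is inherited directly from that of $\sigma$, and $\bar\sigma(1) = \overline{\sigma(1)} = 1$, so $\bar\sigma$ is a Frobenius splitting of $\Ra\lambda$.

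For the final assertion, the main result of \cite{LMP} provides a Frobenius splitting of $\flag$ that maximally compatibly splits the identity whenever each simple component of $G$ is of classical type or of type $G_2$, so the hypothesis of the main claim is satisfied in these cases.

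The main technical subtlety is the passage from the inclusion $\sigma(\RI\lambda{pm+1}) \subseteq \RI\lambda{m+1}$ supplied by Proposition \ref{pr:max split section ring} to the parallel shift $\sigma(\RI\lambda{pm}) \subseteq \RI\lambda m$ needed to induce a well-defined splitting on the associated graded; this is resolved cleanly by the elementary filtration inclusion indicated above. Note, in particular, that one need not know whether $\Ra\lambda$ is Noetherian in order to conclude that it is Frobenius split, since the splitting is constructed directly on the filtered ring $\R\lambda$ and then passed to the graded object.
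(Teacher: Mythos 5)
Your argument is correct and follows the same route the paper takes: combine Proposition \ref{pr:max split section ring} (applied to $X = \flag$, $Y = \{eB\}$, $\mc L = \L(\lambda)$) with the identifications (\ref{eq:I lambda}) and (\ref{eq:gr R}), and descend the resulting filtered splitting of $\R\lambda$ to the associated graded ring $\Ra\lambda$. The paper states this transition without elaboration; you have correctly supplied the bookkeeping (deriving $\sigma(\RI\lambda{pm}) \subseteq \RI\lambda m$ from the containment $\RI\lambda{pm} \subseteq \RI\lambda{p(m-1)+1}$, checking well-definedness and Frobenius-linearity of the induced $\bar\sigma$, and invoking \cite{LMP} for the final assertion).
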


\subsection{A representation-theoretic analog of maximal compatible splittings}

We now derive a representation-theoretic statement which is equivalent to the existence of a splitting of $\flag$ that maximally compatibly splits the identity.

Set $\Fo := \displaystyle \prod_{\beta \in \pos} \F \beta{p-1} \in \barnm$, a so-called norm form for the hyperalgebra of the first Frobenius kernel of $U^-$. By Lemma 6.6 and Proposition 6.7 in \cite{Hab80}, $\Fo$ is independent of the order of roots in the product and is central in $\barnm$. Also set $\fo := \displaystyle \prod_{\beta \in \pos} \f \beta{p-1} \in \hynm$.

Recall that $\rho$ denotes the half-sum of the positive roots. Also recall the definition of the highest-weight vector $ v^a_\tprho \in \Va \tprho $ from \S\ref{sub:PBW}. We now have the following representation-theoretic interpretation of maximal compatible splittings.

\begin{proposition} \label{pr:fo}
There is a Frobenius splitting of $\flag$ that maximally compatibly splits the identity if and only if $\fo.v^a_\tprho \neq 0$ in $\Va \tprho$.
\end{proposition}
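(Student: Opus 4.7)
The plan is to identify Frobenius splittings of $\flag$ with sections in $\ho\tprho$ via the canonical isomorphism $\operatorname{Hom}_{\mc O_\flag}(F_*\mc O_\flag, \mc O_\flag) \cong \ho\tprho$ (using $\omega_\flag^{1-p} \cong \L(\tprho)$, which follows from $\omega_\flag \cong \L(-2\rho)$), and then translate the maximal-compatibility condition at $e$ into the claimed pairing statement using the Chevalley coordinate description from Lemma \ref{lem:ib and hof}. Working on the big cell $\Um$ with coordinates $c_1, \ldots, c_N$, any Frobenius-linear endomorphism of $\kum$ takes the form $\sigma_g : f \mapsto \sigma_0(gf)$ for a unique $g \in \kum$, where $\sigma_0$ is the canonical Cartier-type operator sending $c^a$ to $c^{(a-(p-1)\mathbf 1)/p}$ when $a \equiv (p-1)\mathbf 1 \pmod p$ componentwise and to $0$ otherwise. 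A direct combinatorial check then shows that the max compat condition $\sigma_g(\im^{pm+1}) \subseteq \im^{m+1}$ for all $m \geq 0$ is equivalent to the uniform bound $g \in \im^{N(p-1)}$: any monomial $c^k$ with $|k| \geq N(p-1)$ contributes to $\sigma_g(c^a)$ a term of degree $(|a|+|k|-N(p-1))/p \geq m+1$ whenever $|a| \geq pm+1$, while a monomial with $|k| < N(p-1)$ can be paired with a suitable small $a$ to give a violating low-degree term. The splitting requirement $\sigma_g(1) = 1$ separately forces the $c^{(p-1)\mathbf 1}$-coefficient of $g$ to equal $1$ and all $c^{pb+(p-1)\mathbf 1}$-coefficients for $b \neq 0$ to vanish.

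I would then translate back to $\ho\tprho$: by Lemma \ref{lem:ib and hof}, $s \in \hof{N(p-1)}\tprho$ if and only if $a(s) \in \im^{N(p-1)}$, and by the explicit formula in the proof of that lemma together with the order-independence of $\Fo = \F{\beta_1}{p-1} \cdots \F{\beta_N}{p-1}$ in $\barnm$, the coefficient of $c^{(p-1)\mathbf 1}$ in $a(s)$ is exactly $\eta_\tprho(s \otimes \Fo \vstar\tprho)$. A $T$-equivariance argument disposes of the remaining splitting conditions: a weight-zero section $s$ has $a(s)$ of $T$-weight $\tprho$ (by direct conjugation computation using $t \ve i(c_i) t^{-1} = \ve i(t^{-\beta_i} c_i)$), while the monomials $c^{pb + (p-1)\mathbf 1}$ for $b \neq 0$ have weight $\tprho + p \sum b_i \beta_i \neq \tprho$, so their coefficients in $a(s)$ automatically vanish. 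Thus the existence of a maximally compatible splitting of $\flag$ at $e$ is equivalent to the existence of a weight-zero $s \in \hof{N(p-1)}\tprho$ with $\eta_\tprho(s \otimes \Fo \vstar\tprho) \neq 0$.

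Finally, using $\tprho^* = \tprho$ (from $w_0 \rho = -\rho$), the element $\Fo \vstar\tprho \in \vf{N(p-1)}\tprho$ has associated graded image $\fo v^a_\tprho \in \Va\tprho_{N(p-1)}$, so for $s \in \hof{N(p-1)}\tprho$ the pairing factors as $\eta_\tprho(s \otimes \Fo \vstar\tprho) = \etagr\tprho(\bar s \otimes \fo v^a_\tprho)$, where $\bar s \in \hoa\tprho_{N(p-1)}$ is the image of $s$. Since the graded pairing $\etagr\tprho$ is nondegenerate by (\ref{eq:etagr}), the existence of such $\bar s$ with nonzero pairing against $\fo v^a_\tprho$ is equivalent to $\fo v^a_\tprho \neq 0$, completing the equivalence. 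The main obstacle is the first step: the local characterization of maximally compatible splittings by the clean uniform vanishing condition $g \in \im^{N(p-1)}$ requires careful bookkeeping across residue classes of $|a| \pmod p$, although it ultimately reduces to the observation that the worst-case lower bound on $|k|$ across all those residues is precisely $N(p-1)$.
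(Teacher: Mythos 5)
Your overall architecture matches the paper's: (1) identify splittings with sections of $\ho\tprho$ via $\can \cong \L(\tprho)$; (2) translate maximal compatibility at $e$ into the condition $s \in \hof\pN\tprho$ using Lemma \ref{lem:ib and hof}; (3) pass to the graded pairing $\etagr\tprho$ and conclude via nondegeneracy. The difference is in how you establish the two intermediate facts. Where the paper simply cites the trace formula $ev(s) = \eta_\tprho(s \otimes \Fo.v_\tprho)$ for the evaluation morphism and invokes Lemma 2.12 of \cite{LTmax} for the equivalence between ``maximally compatibly split'' and ``$s \in \hof\pN\tprho$,'' you re-derive both from scratch by an explicit computation with the Cartier operator $\sigma_0$ on $\kum$, characterizing maximal compatibility on the big cell by the uniform vanishing condition $g \in \imp\pN$ and extracting the normalization coefficient $g_{(p-1)\mathbf 1}$. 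This is more self-contained and ties back nicely to the coordinate description in the proof of Lemma \ref{lem:ib and hof}, but it forces an extra reduction: because you check $\sigma_g(1)=1$ coefficient-by-coefficient rather than through the abstract trace formula, you must first restrict to $T$-weight-zero sections to dispose of the $c^{pb + (p-1)\mathbf 1}$-coefficients for $b \neq 0$. That reduction is valid (the evaluation morphism is a weight-zero functional and the filtration $\hof\bullet\tprho$ is $T$-stable, so one may project any maximally compatible splitting to its weight-zero component, and conversely a weight-zero $\bar s$ with nonzero pairing exists iff $\fo.v^a_\tprho \neq 0$ since $\fo.v^a_\tprho$ has weight zero), and once it is in place your argument is sound. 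The paper's route is shorter because the abstract trace formula absorbs this bookkeeping; yours makes the local computation at $e$ fully explicit.
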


\begin{proof}
We first recall the following general geometric setting for Frobenius splittings of $G/B$ (cf \cite{BK}). For any smooth variety $X$ there is an isomorphism
\begin{equation}
\shom_{\mc O_X} \big(  F_* \mc O_X , \mc O_X \big) \cong \Gamma \big( X, \omega_X^{1-p} \big)
\end{equation}
and an evaluation morphism (often called a trace morphism) $ev : \Gamma \big( X, \omega_X^{1-p} \big) \to k$ which detects splittings, in the following sense: a section $s \in \Gamma \big( X, \omega_X^{1-p} \big)$ is a splitting if and only if $ev(s) = 1$. In the case $X = \flag$ we have $ \can = \L\big(  \tprho  \big) $. By rescaling $v_\tprho$ by a nonzero scalar if necessary, the evaluation morphism
$$ev : \ho \tprho \to k$$
is given by
\begin{equation} \label{eq:ev}
ev(s) = \eta_\tprho\big(  s \otimes \Fo.v_\tprho  \big)
\end{equation}
for all $s \in \ho \tprho$.

By Lemma \ref{lem:ib and hof}, $ \hof m \tprho $ is the subspace of $ \ho \tprho $ consisting of elements that vanish to multiplicity at least $m$ at the identity in $\flag$. It follows by Lemma 2.12 in \cite{LTmax} that a splitting section $s \in \ho \tprho$ maximally compatibly splits the identity if and only if $s \in \hof \pN \tprho$.

On the other hand, recall that $N = |\pos|$. Since $ \Fo $ is an element of $\barnm$ of filtration degree $\pN$ it follows that $\Fo.v_\tprho \in \vf \pN \tprho$. Further, $\fo$ is the image of $\Fo$ under the projection $ \barnmf \pN \twoheadrightarrow \hynmg \pN $. Thus we have
\begin{equation}
\fo.v^a_\tprho \neq 0 \tr{ if and only if } \Fo.v_\tprho \notin \vs \pN \tprho.
\end{equation}

Let $s \in \ho \tprho$ be a section that compatibly splits the identity with maximal multiplicity. Then by the above discussion
$$s \in \hof \pN \tprho = \vs \pN \tprho^\perp $$
and
$$  \eta\big(  s \otimes \Fo.v_\tprho  \big) = 1 . $$
This implies $ \Fo.v_\tprho \notin \vs \pN \tprho $ and hence $ \fo.v^a_\tprho \neq 0 $.

Conversely, assume that $\fo.v^a_\tprho \neq 0$. Recall the graded nondegenerate pairing
\begin{equation}
\etagr \tprho : \hoa \tprho \otimes \Va \tprho \to k
\end{equation}
from (\ref{eq:etagr}). Choose $s' \in \hoa \tprho_\pN$ such that
$$ \etagr \tprho( s' \otimes \fo.v^a_\tprho ) = 1  $$
and let $s \in \hof \pN \tprho$ be any lift of $s'$ under the projection
$$ \hof \pN \tprho \twoheadrightarrow \hoa \tprho_\pN . $$
Then
$$  \eta_\tprho\big(  s \otimes \Fo.v_\tprho  \big) = \etagr \tprho\big(  s' \otimes \fo.v^a_\tprho  \big) = 1 $$
so that $s$ is a splitting. Since $s \in \hof \pN \tprho$, $s$ splits the identity with maximal multiplicity as desired.
\end{proof}

\begin{remark}
Assume that $G$ is of type $A_n$, $C_n$ or $G_2$. It follows from the combinatorial description of bases for $\Va \lambda$ given in \cite{FeOrb}, \cite{FFLPBWA}, \cite{FFLPBWC}, \cite{FFLPBWZ}, \cite{Gor} that we have $\fo.v^a_\tprho \neq 0$ in $\Va \tprho$. Thus in those types Proposition \ref{pr:fo} gives an alternate proof that the identity is maximally compatibly split in $\flag$; as noted above, this was initially proved in \cite{LMP} in the classical types and in type $G_2$.
\end{remark}

\section{Geometry} \label{sec:geometry}

\subsection{The degenerate flag variety $\flaga \lambda$} \label{sub:flaga}

For $\lambda \in \dom$ set $\Pa \lambda := \P\big(  \Va \lambda  \big)$ and let $[\vp] \in \Pa \lambda$ be the class of the highest weight vector $\vp \in \Va \lambda$. Following \cite{FeGa} we define the degenerate (partial) flag variety to be
\begin{equation}
\flaga \lambda := \overline{ \Ga.[\vp] } = \overline{ \Uma.[\vp] } \subseteq \Pa \lambda.
\end{equation}
This construction is analogous to the construction of the classical (partial) flag variety as an orbit in $\P\big(  \V \lambda  \big)$. However, unlike in the classical case, $ \flaga \lambda $ is not a single $\Ga$-orbit and it is not smooth in general.

It is shown in \cite{FeOrb}, Corollary 3.4 that in types $A_n$, $C_n$ and $G_2$ the ring $ \Ra{\lambda^*} $ is the homogeneous coordinate ring of $\flaga \lambda$. By Lemma \ref{lem:projective splitting} and Theorem \ref{th:Frobenius splitting of Ra} we immediately have:

\begin{theorem} \label{th:ACG splitting}
Assume that $G$ is of type $A_n$, $C_n$ or $G_2$. Then for any $\lambda \in \dom$ the degenerate flag variety $ \flaga \lambda $ is Frobenius split.
\end{theorem}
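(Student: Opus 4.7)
The statement is essentially a corollary obtained by gluing together three results already in the paper, so my plan is to spell out that gluing cleanly and verify the hypotheses of \ref{lem:projective splitting}.

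First I would recall that by \cite{FeOrb}, Corollary 3.4, in types $A_n$, $C_n$ and $G_2$ we have an isomorphism
\[
\flaga \lambda \;\cong\; \proj\,\Ra{\lambda^*}
\]
as projective $k$-varieties. Thus to show $\flaga \lambda$ is Frobenius split it suffices to show that $\proj\,\Ra{\lambda^*}$ is Frobenius split. Since $\lambda \in \dom$ iff $\lambda^* \in \dom$, this is no loss of generality.

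Next I would apply Lemma \ref{lem:projective splitting} to the graded $k$-algebra $A := \Ra{\lambda^*}$. To do this I need to check the three hypotheses of the lemma: (a) $A_0 = k$, (b) $A$ is Noetherian, and (c) $A$ is Frobenius split as a ring. For (a), note that $A_0 = \hoa{0} = \ho{0} = k$ since $\chi_0$ is the trivial character. For (b), as noted in the introduction, Noetherianity of $\Ra \mu$ for dominant $\mu$ is known in precisely the types $A_n$, $C_n$ and $G_2$ (by \cite{FeGa} and \cite{FeOrb}, because in these types $\Ra{\lambda^*}$ coincides with the section ring $\Sa \lambda$ on the projective variety $\flaga \lambda$). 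For (c), since the simple components of $G$ (here, a single component of type $A_n$, $C_n$ or $G_2$) are all classical or $G_2$, Theorem \ref{th:Frobenius splitting of Ra} applies and yields that $\Ra{\lambda^*}$ is Frobenius split as a ring.

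Combining these, Lemma \ref{lem:projective splitting} gives that $\proj\,\Ra{\lambda^*}$ is Frobenius split, and the isomorphism $\flaga \lambda \cong \proj\,\Ra{\lambda^*}$ transports this splitting to $\flaga \lambda$, completing the proof. There is no real obstacle here; the only subtlety worth flagging is that $\flaga \lambda$ is singular in general, but Frobenius splitting is a well-defined notion for arbitrary (not necessarily smooth) schemes and Lemma \ref{lem:projective splitting} is stated in that generality, so no smoothness hypothesis is needed. The Noetherianity hypothesis is the one that forces the restriction to types $A_n$, $C_n$ and $G_2$; in the other classical types where Theorem \ref{th:Frobenius splitting of Ra} still applies, the corresponding statement for $\flaga \lambda$ cannot be obtained by this route.
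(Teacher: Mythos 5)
Your proposal is correct and takes the same route the paper does: invoke \cite{FeOrb}, Corollary 3.4, to identify $\flaga\lambda$ with $\proj\,\Ra{\lambda^*}$, then feed Theorem \ref{th:Frobenius splitting of Ra} through Lemma \ref{lem:projective splitting}. Your explicit verification of the hypotheses of Lemma \ref{lem:projective splitting} (in particular Noetherianity, which the paper leaves implicit and which is indeed the reason the argument is confined to types $A_n$, $C_n$, $G_2$) is the right thing to spell out.
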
  

\begin{remark}
As noted earlier, the fact that $\flaga \lambda$ is Frobenius split in types $A_n$ and $C_n$ was initially proved in \cite{FFA} and \cite{FFLC} using a different technique.
\end{remark}

\subsection{Splitting results for $\flaga \lambda$}

We now turn our attention to a comparison of the ring $\Ra \lambda$ with the homogeneous coordinate ring of an associated degenerate flag variety (called $\Sa \lambda$ below) in any type. Although it is reasonable to hope that these rings are isomorphic in all types it is not clear how to show this in general, so a coarser analysis is necessary. 

\subsubsection{The section ring $\Sa \lambda$}
Given $\lambda \in \dom$, for now we will consider $ \flagastar \lambda $ rather than $ \flaga \lambda $. Although it is slightly cumbersome we do this because we want to focus on $\hoa \lambda = \Vass \lambda$.

For $n \geq 0$ consider the very ample invertible sheaf $\div n = \ofa \lambda n$ on $\flagastar \lambda$ induced by the inclusion $\flagastar \lambda \hookrightarrow \Pastar \lambda$. Set
\begin{equation}
\Sa \lambda := \bigoplus_{n \geq 0} \Gamma \big(  \flagastar \lambda, \div n  \big).
\end{equation}
Then $ \flagastar \lambda = \proj (\Sa \lambda) $.

\subsubsection{Comparison of $\Ra \lambda$ and $\Sa \lambda$} \label{subsub:Ra and Sa 1}

Choose $\lambda \in \dom$ and let
$$i : \hoa \lambda = \Vss \lambda = \Gamma\big(  \Pastar \lambda, \opa \lambda 1 \big) \to \Gamma\big(  \flagastar \lambda, \ofa \lambda 1  \big)$$
be the $\Ga$-equivariant morphism given by section restriction. As noted in the proof of Theorem 5.2 in \cite{FFA}, by considering restriction of sections to the degenerate big cell $\Uma.[\vstar \lambda] \subseteq \flagastar \lambda$ we see that $i$ is injective. Indeed, this follows from the observation that the set $ \Uma.\vstar \lambda $ spans $\Vastar \lambda$.

The goal of this section is to prove the following crucial proposition which allows us to compare the rings $\Ra \lambda$ and $\Sa \lambda$.

\begin{proposition} \label{pr:Ra and Sa}
There are $\Uma$-equivariant algebra inclusions
\begin{equation}
\r : \Ra \lambda \hookrightarrow k[\Uma] \otimes k[t]
\end{equation}
and
\begin{equation}
\s : \Sa \lambda \hookrightarrow k[\Uma] \otimes k[t]
\end{equation}
such that the following diagram commutes:
\begin{equation} \label{eq:r and s}
\xymatrix{
& \Ra \lambda \ar@{^(->}[dr]^\r & \\
\hoa \lambda \ar@{^(->}[ur] \ar@{_(->}[dr]_i && k[\Uma] \otimes k[t] \\
& \Sa \lambda \ar@{_(->}[ur]_\s &
}
\end{equation}
(Here we put the trivial $\Uma$-algebra structure on $k[t]$.)
\end{proposition}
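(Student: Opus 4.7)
The plan is to realize both $\r$ and $\s$ via pullback of sections along the $\Uma$-orbit map $\tilde\pi: \Uma \to \flagastar\lambda \subseteq \Pastar\lambda$, $u \mapsto u.[\vastar\lambda]$, trivializing the relevant line bundles by means of the canonical lift $\phi: \Uma \to \Vastar\lambda \setminus \{0\}$, $u \mapsto u.\vastar\lambda$. Since $\phi$ is nowhere zero, it provides compatible trivializations of the pullbacks of $\opa\lambda n$ (and hence of $\ofa\lambda n$) to $\Uma$ for all $n \geq 0$, with the degree-$n$ trivialization being the $n$-th tensor power of the degree-one trivialization.

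Concretely, I would define $\r$ on graded pieces by $\r(v) := f_v \cdot t^n$ for $v \in \hoa{n\lambda}$, where $f_v(u) := \etagr{n\lambda}\big(v \otimes u.\vastar{n\lambda}\big)$, and define $\s$ by sending $s \in \Gamma(\flagastar\lambda, \ofa\lambda n)$ to $(\tilde\pi^* s) \cdot t^n$ under the $\phi$-trivialization. Multiplicativity of $\r$ reduces to the identity $c(u.\vastar{(n+m)\lambda}) = u.\vastar{n\lambda} \otimes u.\vastar{m\lambda}$ for the $\Uma$-equivariant comultiplication $c: \Va{((n+m)\lambda)^*} \to \Va{(n\lambda)^*} \otimes \Va{(m\lambda)^*}$ dual to (\ref{eq:hoa mult}); this holds because $c$ sends the one-dimensional highest-weight space isomorphically to the highest-weight space of the target (with the normalization fixed by the ring structure on $\Ra\lambda$ giving $c(\vastar{(n+m)\lambda}) = \vastar{n\lambda} \otimes \vastar{m\lambda}$), and because $\Uma$, being commutative and group-like, acts diagonally on tensor products. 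Multiplicativity of $\s$ follows from functoriality of pullback combined with the naturality of the $\phi$-trivialization under tensor products. Injectivity of $\r$ follows from the spanning observation that $\Uma.\vastar{n\lambda}$ spans $\Vastar{n\lambda}$; injectivity of $\s$ follows from the density of $\tilde\pi(\Uma)$ in the integral variety $\flagastar\lambda$. The $\Uma$-equivariance on both sides is a direct calculation.

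Diagram commutativity reduces to verifying $\r(v) = \s(i(v))$ for $v \in \hoa\lambda$. For such a $v$, viewed as a linear form on $\Vastar\lambda$, the section $i(v) \in \Gamma(\flagastar\lambda, \ofa\lambda 1)$ is simply the restriction of $v$ to $\flagastar\lambda$, and its pullback along $\tilde\pi$ via the $\phi$-trivialization is precisely the function $u \mapsto v(\phi(u)) = v(u.\vastar\lambda) = f_v(u)$. The main subtlety is that in higher degrees $\Gamma(\flagastar\lambda, \ofa\lambda n)$ may properly contain the image of $S^n \hoa\lambda$; fortunately this does not obstruct $\s$, since pullback is defined on an arbitrary section of $\ofa\lambda n$ regardless of whether it extends to a polynomial on $\Vastar\lambda$. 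It is precisely this discrepancy, however, that prevents us from simply identifying $\Ra\lambda$ with $\Sa\lambda$ and motivates the more delicate analysis of the images of $\r$ and $\s$ carried out in the rest of the section.
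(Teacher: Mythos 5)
Your proposal is correct and follows the same overall structure as the paper's proof: the maps $\r$ and $\s$ are defined by the same formulas (the duality-pairing recipe $r_{n\lambda}(v)(u)=\etagr{n\lambda}(v\otimes u.\vastar{n\lambda})$ for $\r$, and the nonvanishing tautological section — your $\phi$, the paper's $c(u)=u.\vastar\lambda$ — to trivialize $\div 1|_{\Uma}$ for $\s$), and diagram commutativity reduces in both cases to the same formal check that $r_\lambda = s\circ i$. The one substantive difference is how multiplicativity of $\r$ is established. The paper offloads this to Corollary 4.3 and Theorem 4.4 of \cite{KuNil} (extended from $\C$ to an arbitrary field, with a remark on which intermediate statements must be bypassed), obtaining compatibility with $n$-th power maps and hence the algebra structure via the identification $\kuma\cong\cone$. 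You instead give a direct, self-contained argument using the dual comultiplication $c:\Va{(n+m)\lambda^*}\to\Va{n\lambda^*}\otimes\Va{m\lambda^*}$, noting that $c$ is $\Uma$-equivariant, that $\Uma$ acts diagonally on tensor products, and that $c$ is nonzero (hence an isomorphism) on the one-dimensional degree-$0$ graded piece; the last point does hold, since the dual degree-$0$ multiplication $\hoa{n\lambda}_0\otimes\hoa{m\lambda}_0\to\hoa{(n+m)\lambda}_0$ is evaluation at the identity, which is evidently nonzero. This is a genuine simplification: it avoids the dependence on \cite{KuNil} and makes the ring-map property of $\r$ manifest in arbitrary characteristic. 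The only thing to be slightly more explicit about is that the identity $c(\vastar{(n+m)\lambda})=\vastar{n\lambda}\otimes\vastar{m\lambda}$ (rather than merely proportionality) requires a consistent normalization of the highest-weight vectors across all $n$, e.g. taking $v_{n\lambda^*}$ to be the Cartan product $v_{\lambda^*}^{n}$; without such a choice $\r$ is only an algebra map up to graded rescaling. Your closing remark correctly anticipates the asymmetry between $\Ra\lambda$ and $\Sa\lambda$ in higher degrees that drives the analysis in \S\ref{subsub:Ra and Sa 2}.
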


\begin{proof}
We split the proof into multiple parts.

(1) \tb{The map $\r$.}

This map is the extension of a special case of Corollary 4.3 and Theorem 4.4 in \cite{KuNil} to an arbitrary base field. Following \cite{KuNil} we may identify $\kuma \cong \symn$ with the tangent cone $\cone$ to $e \in \flag$. Under this identification, it follows from Corollary 4.3 and Theorem 4.4 of \cite{KuNil} (where we take the special case $v = w = e$) that there is a $\Uma$-equivariant inclusion
\begin{subequations} 
\begin{equation}
r_\lambda : \hoa \lambda \hookrightarrow k[\Uma]
\end{equation}
given for $v \in \hoa \lambda$ and $u \in \Uma$ by
\begin{equation} \label{eq:r explicitly}
r_\lambda(v)(u) = \etagr \lambda\big(  v \otimes u.\vps  \big)
\end{equation}
\end{subequations}
Although the statement is over $\C$ in \cite{KuNil} the proof is valid over any field. (Note that the statement preceeding Lemma 4.2 in \cite{KuNil} is not valid in arbitrary characteristic, but we do not need that statement to prove Lemma 4.2 in this special case since that lemma follows immediately from the definition of the filtration $\hof n \lambda$.) It is easy to check that $r_\lambda$ is $\Uma$-equivariant.

Next, the construction in \cite{KuNil} shows that for all $n \geq 0$ there is a commutative diagram
\begin{equation*}
\xymatrix{
\hoa \lambda \ar[d] \ar@{^(->}[r]^{\hspace{.2in} r_\lambda} & \cone \ar[d] \\
\hoa{n \lambda} \ar@{^(->}[r]_{\hspace{.2in} r_{n\lambda}} & \cone
}
\end{equation*}
where the vertical arrows are the $n^{th}$ power morphisms. Thus the maps $r_{n\lambda}$ for $n \geq 0$ induce a $\Uma$-equivariant algebra inclusion
\begin{align}
\r : \Ra \lambda &\hookrightarrow k[\Uma] \otimes k[t], \nonumber \\
\r (v) &= r_{n\lambda} (v) \otimes t^n \tr{ for all } v \in \hoa {n\lambda}.
\end{align}

(2) \tb{The map $\s$}

Identify $\Uma$ with the degenerate big cell $\Uma.[\vstar \lambda] \subseteq \flagastar \lambda$. We construct the map $\s$ by choosing a particular trivialization of $ \div 1 $ over $\Uma$.

There is a canonical nonvanishing section $c$ of the tautological bundle $\div{-1}|_\Uma$ given by $c(u) = u.\vstar \lambda$ for all $u \in \Uma$. Dualizing this section we obtain a trivialization
\begin{equation} \label{eq:s}
s : \Gamma\big(  \Uma, \div 1|_\Uma  \big) \isom k[\Uma]
\end{equation}
of $ \div 1|_\Uma $. Taking powers of this trivialization we obtain trivializations
$$ s_n : \Gamma\big(  \Uma, \div n|_\Uma  \big) \isom k[\Uma] $$ 
for all $n \geq 0$ such that $ s_n(f)^m = s_{mn}(f^m) $ for all $m \geq 0$ and $f \in \Gamma\big(  \Uma, \div n|_\Uma  \big)$. As a result we obtain an algebra inclusion
\begin{equation} \label{eq:Sa}
\s : \Sa \lambda \hookrightarrow k[\Uma] \otimes k[t]
\end{equation}
given by
$$  \s( f ) = s_n (f) \otimes t^n  $$
for all $n \geq 0$ and $f \in \Gamma\big(  \flagastar \lambda, \div n  \big) \subseteq \Gamma\big(  \Uma, \div n|_\Uma  \big)$.

(3) \tb{The commutativity of the diagram (\ref{eq:r and s})}

To show that (\ref{eq:r and s}) commutes it suffices to check that $r_\lambda = s \circ i$. By construction, the trivialization $s$ takes a section $v \in \Gamma(\Uma, \div 1|_\Uma)$ to its pairing with the element $c \in \Gamma(\Uma, \div{-1}|_\Uma)$. For $v$ in the image of $i : \hoa \lambda \hookrightarrow \Gamma(\Uma, \div 1|_\Uma)$ this pairing is given by the duality pairing
$$\etagr \lambda( v \otimes c(u)) = \etagr \lambda( v \otimes u.\vastar \lambda )$$
for $u \in \Uma$ and the result follows, cf (\ref{eq:r explicitly}).
\end{proof}

\begin{remarks}
\begin{enumerate}[(1)]
\item As a result of this proposition we see that $\Ra \lambda$ is a domain.
\item We have the following alternate geometric description of $\s$. Let $\linv$ denote the total space of the dual bundle $ \mc O(-1) $ on $\flagastar \lambda$ and let $\huma \subseteq \linv$ be the fiber over $\Uma$, an affine open subset of $\linv$. Then we have $ H^0( \linv, \mc O_\linv ) = \Sa \lambda $ and the trivialization $s$ of (\ref{eq:s}) gives an identification $ k[\huma] \cong \kuma \otimes k[t] $. With this setup, $\s$ identifies with the natural restriction map $H^0( \linv, \mc O_\linv ) \hookrightarrow k[\huma]$.
\end{enumerate}  
\end{remarks}

\subsubsection{$\Ra \lambda$ and $\Sa \lambda$} \label{subsub:Ra and Sa 2}

We now have the following fact which allows us under certain circumstances to identify $\proj (\Ra \lambda)$ with $\flagastar \lambda$.

\begin{proposition} \label{pr:proj Ra}
Let $\lambda \in \dom$ and assume that $ \Ra \lambda $ is generated as a $k$-algebra by its degree-1 subspace $\hoa \lambda$. Then $ \proj \,(\Ra \lambda) \cong  \proj \,(\Sa \lambda) = \flagastar \lambda $.
\end{proposition}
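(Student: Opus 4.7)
The plan is to exploit the commutative diagram (\ref{eq:r and s}) from Proposition \ref{pr:Ra and Sa} to identify $\Ra \lambda$ with the graded subalgebra of $\Sa \lambda$ generated by its degree-1 component, and then to observe that this subalgebra has $\proj$ equal to $\flagastar \lambda$ because the sections in $i(\hoa \lambda)$ cut out the given closed embedding $\flagastar \lambda \hookrightarrow \Pastar \lambda$.

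To set up the isomorphism, I would let $\Sa'_\lambda \subseteq \Sa \lambda$ denote the graded $k$-subalgebra generated by $i(\hoa \lambda) \subseteq (\Sa \lambda)_1$. The commutativity of (\ref{eq:r and s}) gives $\r|_{\hoa \lambda} = \s \circ i$, and in particular $\r(\hoa \lambda) = \s(i(\hoa \lambda))$ inside $k[\Uma] \otimes k[t]$. By hypothesis $\Ra \lambda$ is generated as a $k$-algebra by its degree-1 subspace $\hoa \lambda$, so $\r(\Ra \lambda)$ equals the $k$-subalgebra of $k[\Uma] \otimes k[t]$ generated by $\r(\hoa \lambda) = \s(i(\hoa \lambda))$; because $\s$ is an injective $k$-algebra map, this subalgebra equals $\s(\Sa'_\lambda)$. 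Since $\r$ is also injective and graded, this yields a graded $k$-algebra isomorphism $\Ra \lambda \isom \Sa'_\lambda$.

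To finish, I would identify $\proj \Sa'_\lambda$ with $\flagastar \lambda$ as follows. The surjection $\tr{Sym}(\hoa \lambda) \twoheadrightarrow \Sa'_\lambda$ induced by multiplication in $\Sa \lambda$ sends $v_1 \cdots v_n \in \tr{Sym}^n(\hoa \lambda)$, viewed as a section of $\opa \lambda n$ on $\Pastar \lambda = \P(\Vastar \lambda)$, to its restriction to $\flagastar \lambda$; its kernel is therefore precisely the homogeneous ideal defining the closed embedding $\flagastar \lambda \hookrightarrow \Pastar \lambda$. Hence $\Sa'_\lambda$ is the homogeneous coordinate ring of $\flagastar \lambda$ for that embedding, so $\proj \Sa'_\lambda \cong \flagastar \lambda = \proj \Sa \lambda$, as required.

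The main obstacle I anticipate is the bookkeeping in the first step: one must carefully track that the two embeddings of degree-1 generators into the common ambient ring $k[\Uma] \otimes k[t]$ coincide, and that generation in degree 1 then transports faithfully across the identification. Once this is secured, the identification of $\proj \Sa'_\lambda$ with $\flagastar \lambda$ is a standard consequence of the fact that $\ofa \lambda 1$ furnishes the defining embedding into $\Pastar \lambda$.
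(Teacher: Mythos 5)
Your proof is correct and takes essentially the same route as the paper's: use Proposition \ref{pr:Ra and Sa} to identify $\Ra \lambda$ with the subalgebra of $\Sa \lambda$ generated by its degree-1 part, then argue that this subalgebra has the same $\proj$ as $\Sa \lambda$. The only cosmetic difference is the endgame: the paper shows the subalgebra agrees with $\Sa \lambda$ in all sufficiently high degrees (via Serre vanishing for the ideal sheaf of $\flagastar \lambda$ in $\Pastar \lambda$) and invokes Hartshorne Exercise 2.14(c), whereas you identify the subalgebra directly with the saturated homogeneous coordinate ring of the embedding $\flagastar \lambda \hookrightarrow \Pastar \lambda$ — both rest on the same standard facts and are equally valid.
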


\begin{proof}
Since $\Ra \lambda$ is generated in degree 1 by its subspace $\hoa \lambda$, by Proposition \ref{pr:Ra and Sa} we have an inclusion $\Ra \lambda \hookrightarrow \Sa \lambda$ as subalgebras of $k[\Uma] \otimes k[t]$. Let $\mc J \subseteq \mc O_{\Pastar \lambda}$ be the ideal defining $\flagastar \lambda$. Then for all $n \geq 0$ we have an exact sequence
$$  0 \to \mc J(n) \to \opa \lambda n \to \ofa \lambda n \to 0  $$
of sheaves on $\Pastar \lambda$. For $n \gg 0$ this gives a surjection
\begin{equation} \label{eq:n gg 0}
\Gamma\big(  \Pastar \lambda, \opa \lambda n  \big) \twoheadrightarrow \Gamma\big( \flagastar \lambda, \ofa \lambda n   \big).
\end{equation}
Since $ \Gamma\big(  \Pastar \lambda, \opa \lambda 1  \big) = \hoa \lambda $, we have
$$ \Gamma\big(  \Pastar \lambda, \opa \lambda n  \big) = S^n \hoa \lambda $$
and thus by (\ref{eq:n gg 0}) there is a surjective map
\begin{equation} \label{eq:n gg 0 2}
S^n \hoa \lambda \twoheadrightarrow \Gamma\big( \flagastar \lambda, \ofa \lambda n   \big)
\end{equation}
for $n \gg 0$. Considering $ \hoa \lambda $ as a subspace of $\Sa \lambda$ via the section restriction map $i_\lambda$ of Proposition \ref{pr:Ra and Sa}, we see that (\ref{eq:n gg 0 2}) is given by multiplication in $\Sa \lambda$. Hence the inclusion $\Ra \lambda \hookrightarrow \Sa \lambda$ is an equality in high enough graded degrees and the desired isomorphism follows by Exercise 2.14(c) in \cite{Ha}.
\end{proof}

\begin{remark}
More generally, let us relax the assumption that $ \Ra \lambda $ is generated in degree 1. Let $\Ra \lambda' \subseteq \Ra \lambda$ be the $k$-subalgebra generated by the degree-1 subspace $\hoa \lambda$. Then the proof of Proposition \ref{pr:proj Ra} shows that there is an isomorphism $\proj(\Ra \lambda') \cong \flagastar \lambda$ in all types.
\end{remark}

\subsubsection{Frobenius splitting}

Recall that the existence of a Frobenius splitting of $\flag$ that maximally compatibly splits the identity is known in all classical types and in type $G_2$ by \cite{LMP}. As a result of Lemma \ref{lem:projective splitting}, Theorem \ref{th:Frobenius splitting of Ra} and Proposition \ref{pr:proj Ra} we have the following sharpening of Theorem \ref{th:ACG splitting} which gives a general criterion for a degenerate flag variety to be Frobenius split:

\begin{theorem} \label{th:splitting of flaga}
Choose $\lambda \in \dom$. Assume that $ \Ra {\lambda^*} $ is generated as a $k$-algebra by its degree-1 elements and that $\flag$ has a Frobenius splitting that compatibly splits the identity with maximum multiplicity. Then the degenerate flag variety $\flaga \lambda$ is Frobenius split.
\end{theorem}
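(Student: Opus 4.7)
The plan is to simply assemble the three ingredients already available: the splitting of the degenerate section ring from Theorem \ref{th:Frobenius splitting of Ra}, the identification $\proj(\Ra{\lambda^*}) \cong \flaga \lambda$ from Proposition \ref{pr:proj Ra}, and the descent of splittings from graded rings to projective spectra in Lemma \ref{lem:projective splitting}. The only real bookkeeping is with the starring involution and with the Noetherianity hypothesis, which I address below.

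First, I would invoke the hypothesis on $\flag$: by Theorem \ref{th:Frobenius splitting of Ra}, the existence of a Frobenius splitting of $\flag$ that maximally compatibly splits the identity implies that for every dominant weight $\nu$ the ring $\Ra \nu$ is Frobenius split. Applying this with $\nu = \lambda^*$, I obtain a Frobenius splitting of the graded $k$-algebra $\Ra{\lambda^*}$.

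Next I would translate the generation hypothesis into a geometric statement. By assumption, $\Ra{\lambda^*}$ is generated as a $k$-algebra by its degree-$1$ part $\hoa{\lambda^*}$, which is finite-dimensional over $k$; consequently $\Ra{\lambda^*}$ is a finitely generated, hence Noetherian, graded $k$-algebra with degree-$0$ component equal to $k$. Proposition \ref{pr:proj Ra} then gives
\[ \proj(\Ra{\lambda^*}) \;\cong\; \proj(\Sa{\lambda^*}) \;=\; \flagastar{\lambda^*}. \]
Since the involution $\mu \mapsto \mu^*$ satisfies $(\mu^*)^* = \mu$, the right-hand side is exactly $\flaga \lambda$, and I conclude $\proj(\Ra{\lambda^*}) \cong \flaga \lambda$.

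Finally, I would apply Lemma \ref{lem:projective splitting} to $A = \Ra{\lambda^*}$: the Noetherianity verified above lets us use the lemma, so the Frobenius splitting of $\Ra{\lambda^*}$ descends to a Frobenius splitting of $\proj(\Ra{\lambda^*}) \cong \flaga \lambda$. This gives the theorem. The only point that could be called a potential obstacle is the Noetherian hypothesis in Lemma \ref{lem:projective splitting} --- in general $\Ra \nu$ need not be Noetherian, as noted in the introduction --- but the degree-$1$ generation assumption rules out this pathology here, so the proof goes through without further work.
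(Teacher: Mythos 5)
Your proof is correct and is exactly the argument the paper has in mind: the paper simply cites Lemma \ref{lem:projective splitting}, Theorem \ref{th:Frobenius splitting of Ra}, and Proposition \ref{pr:proj Ra} with no further elaboration, and you have spelled out the same three steps, including the $\lambda \leftrightarrow \lambda^*$ bookkeeping and the observation that degree-$1$ generation supplies the Noetherianity needed for Lemma \ref{lem:projective splitting}.
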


\newpage
\bibliographystyle{amsplain}
\bibliography{/Users/Charles/Documents/TeX/thebibliography}

\end{document}